   \edef\Gin@extensions{\Gin@extensions,.mps}
\newtheorem{theorem}{Théorème}
\newtheorem{lemma}[theorem]{Lemme}
\newtheorem{definition}[theorem]{Définition}
\newtheorem{proposition}[theorem]{Proposition}
\newenvironment{proof}{
\trivlist \item[\hskip \labelsep{\it Démonstration.}]}{\hfill\mbox{$\Box$}
\endtrivlist}
\newcommand\oge{\leavevmode\raise.3ex\hbox{$\scriptscriptstyle\langle\!\langle\,$}}
\newcommand\feg{\leavevmode\raise.3ex\hbox{$\scriptscriptstyle\,\rangle\!\rangle$}}
\newcommand\gui[1]{\oge{#1}\feg}
\newcommand \aqo[2]{#1\sur{\gen{#2}}\!}
\newcommand \gen[1]{\left\langle{#1}\right\rangle}
\newcommand \geN[1]{\big\langle{#1}\big\rangle}
\newcommand \ov[1] {\overline{#1}}
\newcommand \sur[1]{\!\left/#1\right.}
\newcommand \lst[1] {\hbox{$[\,#1\,]$}}
\newcommand\scp[2] {\gen{#1\,|\,#2}\!}
\renewcommand\matrix[1]{{\begin{array}{ccccccccccccccccccccccccc} #1 \end{array}}}  
\newcommand\cmatrix[1]{\left[\matrix{#1}\right]}
\newcommand\Cmatrix[2]{\setlength{\arraycolsep}{#1}\left[\matrix{#2}\right]}
\newcommand\crmatrix[1]{{\left[\begin{array}{rrrrrrr} #1 \end{array}\right]}}
\newcommand\lrb[1] {\llbracket #1 \rrbracket}
\newcommand\lrbn {\lrb{1..n}}
\newcommand\tra[1]{{\,^{\rm t}\!#1}}
\newcommand\Ae[1]{\gA^{\!#1}}
\newcommand \som {\sum\nolimits}
\newcommand \ds {\displaystyle}
\newcommand \snic[1]{\mni

{\centering$#1$\par}

}
\newcommand \rem{
\noindent {\it Remarque. }}
\newcommand \eoe {\hbox{}\nobreak\hfill
\vrule width 1.4mm height 1.4mm depth 0mm \par \smallskip}
\newcommand \lora {\longrightarrow}
\newcommand{\llongrightarrow}{\relbar\joinrel\mkern-1mu\longrightarrow}
\newcommand{\lllongrightarrow}{\relbar\joinrel\mkern-1mu\llongrightarrow}
\newcommand{\llllongrightarrow}{\relbar\joinrel\mkern-1mu\lllongrightarrow}
\newcommand \llra {\llongrightarrow}
\newcommand \llllra {\llllongrightarrow}
\newcommand\vvers[1]{\buildrel{#1}\over \llra }
\newcommand\vvvvers[1]{\buildrel{#1}\over \llllra }
\newcommand \sni {\smallskip \noindent }
\newcommand \mni {\medskip \noindent }
\newcommand \MM{\mathbb{M}}
\newcommand \Mn{{\MM_n}}
\newcommand \GL{\mathbb{GL}}
\newcommand \GLn{{\GL_n}}
\newcommand \ua  {{\underline{a}}}
\newcommand \ug  {{\underline{g}}}
\newcommand \uu  {{\underline{u}}}
\newcommand \uf  {{\underline{f}}}
\newcommand \uX  {{\underline{X}}}
\renewcommand \Im {\mathrm{Im}}
\newcommand \fa{\mathfrak{a}}
\newcommand \fc{\mathfrak{c}}
\newcommand \gk{\mathbf{k}}
\newcommand \gA{\mathbf{A}}
\newcommand \gB{\mathbf{B}}
\newcommand \an {a_1,\ldots,a_n}
\newcommand \bn {b_1,\ldots,b_n}
\newcommand \gn {g_1,\ldots,g_n}
\newcommand \xn {x_1,\ldots,x_n}
\newcommand \Xn {X_1,\ldots,X_n}
\newcommand \lfs {f_1,\ldots,f_s}
\newcommand \kuX {{\gk[\uX]}}
\newcommand \kXn {{\gk[\Xn]}}
\newcommand \kxn {{\gk[\xn]}}
\newcommand \klg {$\gk$-\alg}
\newcommand \Alg {$\gA$-\alg}
\newcommand \Amo {$\gA$-module\xspace}
\newcommand \kmo {$\gk$-module\xspace}
\newcommand \ssi {si, et seulement si, }
\newcommand \alg {algèbre\xspace}
\newcommand \bil {bilinéaire\xspace}
\newcommand \carn{caractérisation\xspace}
\newcommand \coes {coefficients\xspace}
\newcommand \csce {complètement sécante\xspace}
\newcommand \csces {complètement sécantes\xspace}
\newcommand \dfn{définition\xspace}
\newcommand \eds {extension des scalaires\xspace}
\newcommand \fit {fidèlement\xspace}
\newcommand \fpte {\fit plate\xspace}
\newcommand \ftm {fortement\xspace}
\newcommand \fuc {\ftm \usc}  
\newcommand \id {idéal\xspace}
\newcommand \itf {\id \tf}
\newcommand \iv {inversible\xspace}
\newcommand \mptf {module \ptf}
\newcommand \ndz {régulier\xspace}
\newcommand \ndze {régulière\xspace}
\newcommand \rege {régulière\xspace}
\newcommand \pf {de \pn finie\xspace}
\newcommand \pn {présentation\xspace}
\newcommand \pol {polynôme\xspace}
\newcommand \pols {polynômes\xspace}
\newcommand \pro {projectif\xspace}
\newcommand \ptf {\pro \tf}
\newcommand \scs {suite \csce}
\newcommand \sfuc {suite \fuc} 
\newcommand \srg {suite régulière\xspace}
\newcommand \suc {suite \usc}
\newcommand \syzy {syzygie\xspace}
\newcommand \syzys {syzygies\xspace}
\newcommand \tf {de type fini\xspace}
\newcommand \Tho {Théorème }
\newcommand \tho {théorème\xspace}
\newcommand \unt {unitaire }
\newcommand \usc {$1$-sécante\xspace}
\newcommand \uscs {$1$-sécantes\xspace}
\begin{document} 

\title{\Tho de de Smit et Lenstra,\\
démonstration élémentaire}
\author{H. Lombardi, C. Quitté}

\vspace{-2em}

\maketitle

\noindent Keywords: Algèbre Commutative, Platitude, \Tho de de Smit et Lenstra, Suites régulières, Profondeur, Mathématiques Constructives.

\medskip \noindent 
MSC 2010: 13C10 (13C15, 13C11, 14B25, 03F65)

\begin{abstract} 
Nous donnons une démonstration élémentaire et constructive d'un \tho de de Smit et Lenstra. 

\smallskip  \centerline{\bf Abstract}

\smallskip  
We give  an elementary and constructive proof for a theorem of de Smit et Lenstra.

\end{abstract}

\noindent \emph{Note.} Dans la version 1 sur arXiv, il manquait le démonstration de la proposition \ref{prop-compsec-1sec}. 

\subsection*{Introduction} 
Le \tho de de Smit et Lenstra en question donné dans l'article \cite{SmLe} est le suivant.
\setcounter{theorem}{11}

\begin{theorem} \label{thdeSmitLenstra} 
 \emph{(Lenstra {\&} de Smit, une platitude remarquable)}
\\
Soit $\gk$ un anneau arbitraire, et soit une \klg $\gA=\aqo\kXn{f_1,\dots,f_n}$. Si~$\gA$ est finie sur $\gk$, la suite 
$(f_1,\dots,f_n)$ est \csce, l'\alg est plate et c'est un \kmo \ptf.
\end{theorem}

Nous en donnons une démonstration constructive élémentaire en nous appuyant sur l'ouvrage \cite{CACM}.

\setcounter{theorem}{0}

\subsection*{Suites régulières, $1$-sécantes, \csces}
\begin{definition}
\label{defSeqReg} ~
Une suite $(a_1, \ldots, a_k)$ dans  un anneau $\gA$
est
\emph{régulière}
si chaque $a_i$ est \ndz dans l'anneau
$\aqo{\gA}{a_j\,;\,j<i}$.%
\end{definition}

\rem Nous avons retenu ici la \dfn de Bourbaki. La plupart des auteurs
réclament en outre que l'\id $\gen{a_1,\ldots ,a_k}$ ne contienne pas $1$. 
L'expérience montre que cette  négation ne fait qu'introduire des complications et des énoncés tordus.
\eoe

\medskip Étant donnée une suite $(a_1, \ldots, a_k)$ dans  un anneau $\gA$, parmi les \syzys entre les $a_i$ figurent ce que l'on  appelle les \textsl{syzygies triviales}: 
\[
{a_ia_j-a_ja_i=0\;\hbox{  pour  }\;i\neq j.}
\]
On notera $R_\ua$  \gui{la} \textsl{matrice des \syzys triviales} (l'ordre des colonnes est sans importance), de format
${n\times  n(n-1)/2}$. Par exemple, pour $n = 5$%
\[
R_{\ua} = \Cmatrix{.3em} {
a_2 & a_3  &  0  & a_4  &  0  &   0 & a_5 & 0 & 0 & 0\cr
-a_1&   0 &  a_3 &   0 &  a_4 &   0& 0 & a_5 & 0 & 0\cr
 0 & -a_1 & -a_2 &   0 &   0 &  a_4&0  &   0 & a_5&   0\cr
 0 &   0 &   0 & -a_1 & -a_2 & -a_3&  0  &   0 & 0& a_5\cr
 0 &   0 &   0 & 0 &   0 &   0 & -a_1 & -a_2 & -a_3& -a_4}. 
\]

\begin{definition} \label{defi1seqFFR}
Soit $(\ua)=(\an)$ dans $\gA$. 
On dit que la suite $(\ua)$ est \emph{\usc} si le module des $\gA$-\syzys entre les~$a_i$ est engendré par les \syzys triviales.
Autrement dit, la suite $(\an)$ est \usc \ssi
la suite 
\[
\Ae {n(n-1)/2} \vvers {R_\ua}  \Ae n
\vvvvers {[a_1\cdots a_n]}  \gen{\an} \lora 0.
\]
est exacte. En particulier l'\id $\gen{\an}$ est un \Amo \pf.
\end{definition}

Rappelons qu'une matrice $M = (m_{ij}) \in \Mn(\gA)$ est dite
\textsl {alternée} si c'est la matrice d'une forme \bil alternée,
i.e. $m_{ii} = 0$ et $m_{ij} + m_{ji} = 0$ pour~$i$,~$j\in\lrbn$.%
\index{matrice!alternée}\index{alternée!matrice ---}

Le \Amo des matrices alternées
 est libre de rang  ${n(n-1)}\over 2$ et admet une base naturelle.
Par exemple, pour $n = 3$,
$$
\Cmatrix{.3em} {0 & a & b \cr -a & 0 & c \cr -b & -c & 0 \cr} =
a\cmatrix {0 & 1 &  0 \cr -1 & 0 & 0 \cr 0 & 0 & 0 \cr} +
b\cmatrix {0 & 0 & 1 \cr 0 & 0 & 0 \cr -1 & 0 & 0 \cr} +
c\cmatrix {0 & 0 & 0 \cr 0 & 0 & 1 \cr 0 & -1 & 0 \cr}.
$$

\begin {lemma}\label{PetitLemmeAlterne}
Soit $a = \tra{[\,\ua\,]} = \tra{[\,a_1\;\cdots\;a_n\,]} \in \Ae {n\times 1}$.
\begin {enumerate}
\item
Soit $M  \in \Mn(\gA)$ une matrice alternée;
on~a $\scp {Ma}{a} = 0$.

\item
Un $u \in \Ae {n\times 1}$ est dans $\Im R_\ua$ \ssi il
existe une matrice alternée $M\in \Mn(\gA)$
telle que $u = Ma$.

\item La suite $(\an)$ est \usc \ssi  pour tout $u \in \Ae {n\times 1}$ tel que $\scp a u = 0$ il existe une matrice alternée $M$ telle que $\tra {[\,\uu\,]}=M\tra {[\,\ua\,]}$. 
\end {enumerate}
\end {lemma}
\begin {proof}\textsl{1.} En effet, $\scp {Ma}{a} = \varphi(a,a)$, où $\varphi$
est une forme \bil alternée.

\noindent \textsl{2.} Par exemple, pour la première colonne de $R_\ua$ avec $n=4$, on~a:
$$ 
\cmatrix {0 & 1 & 0 & 0\cr -1 & 0 & 0 & 0\cr 0 & 0 & 0 & 0\cr 0 & 0 & 0 & 0\cr}
\crmatrix {a_1\cr a_2\cr a_3\cr a_4\cr} =
\Cmatrix{.2em} {a_2\cr -a_1\cr 0\cr 0\cr}, 
$$
et les ${n(n-1)}\over 2$ colonnes de $R_\ua$ correspondent ainsi
aux ${n(n-1)}\over 2$ matrices alternées formant la base naturelle
du \Amo des matrices alternées de~$\Mn(\gA)$.

\noindent \textsl{3}. Résulte de \textsl{2}.
\end {proof}

\begin {lemma}\label{lemsucpf} Soit $(\an)$ une \suc dans $\gA$, $V\in\GLn(\gA)$ et 
\[[\,b_1\;\cdots\;b_n]=[\,a_1\;\cdots\;a_n]V
.\] 
Alors $(\bn)$ est \usc.
\end {lemma}
\begin{proof} Considérons une \syzy $[\,b_1\;\cdots\;b_n]\tra{[\,u_1\;\cdots\;u_n]}=0$, i.e. $[\,a_1\;\cdots\;a_n]V\tra{[\,u_1\;\cdots\;u_n]}=0$.
D'après le lemme \ref{PetitLemmeAlterne} il existe une matrice alternée $M$ telle que $V\tra{[\,u_1\;\cdots\;u_n]}=M\tra{[\,a_1\;\cdots\;a_n]}$. Donc 
\[
\tra{[\,u_1\;\cdots\;u_n]}=(V^{-1}M\tra{V^{-1}})\tra{[\,b_1\;\cdots\;b_n]}.
\] 
La matrice $V^{-1}M\tra{V^{-1}}$ est alternée, on conclut avec le lemme \ref{PetitLemmeAlterne}.
\end{proof}

\begin{proposition}\label{prop1sEregpfSL}
Toute \srg est \usc.
\end{proposition}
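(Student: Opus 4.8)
Le plan est de raisonner par récurrence sur la longueur $n$ de la suite, la seule propriété réellement exploitée étant que, dans une \srgz, chaque $a_i$ est \ndz modulo les précédents. Pour $n=1$, une relation se réduit à un $x_1$ vérifiant $x_1a_1=0$ ; comme $a_1$ est \ndz dans $\gA$, on a $x_1=0$, et le module des relations, réduit à $\{0\}$, est trivialement engendré par la famille (vide) des relations triviales.

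Pour le pas de récurrence, je partirais d'une relation $\sum_{i=1}^{n}x_ia_i=0$ et je la réduirais modulo l'idéal $\gen{a_1,\ldots,a_{n-1}}$. Dans le quotient $\aqo{\gA}{a_1,\ldots,a_{n-1}}$, il ne subsiste que $\ov{x_n}\,\ov{a_n}=0$ ; or $\ov{a_n}$ y est \ndz par définition de \srgz, d'où $\ov{x_n}=0$, c'est-à-dire $x_n=\sum_{j<n}y_ja_j$ pour certains $y_j\in\gA$.

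Je reporterais alors cette expression de $x_n$ dans la relation de départ, ce qui fournit $\sum_{i<n}(x_i+y_ia_n)a_i=0$, une relation entre $a_1,\ldots,a_{n-1}$, suite qui demeure régulière. Par hypothèse de récurrence, le vecteur $(x_1+y_1a_n,\ldots,x_{n-1}+y_{n-1}a_n)$ est combinaison $\gA$-linéaire des relations triviales $a_\ell e_j-a_je_\ell$ (pour $1\le j<\ell\le n-1$). Il resterait à retrouver le vecteur initial $(x_1,\ldots,x_n)$ dans $\gA^n$ : en lui retranchant les relations triviales \emph{mixtes} $\sum_{j<n}y_j(a_ne_j-a_je_n)$, on ôte des $n-1$ premières coordonnées les termes parasites $y_ia_n$ et l'on restitue en dernière coordonnée la somme $\sum_{j<n}y_ja_j=x_n$ ; on obtient ainsi $(x_1,\ldots,x_n)$ comme combinaison de relations triviales, ce qui clôt la récurrence.

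Le point véritablement décisif n'est pas calculatoire mais conceptuel : c'est l'appartenance $x_n\in\gen{a_1,\ldots,a_{n-1}}$, traduction directe de la définition de \srgz. Tout le reste --- substitution, appel à l'hypothèse de récurrence, et correction par les relations triviales mixtes $a_ne_j-a_je_n$ --- est mécanique et ne présente pas d'obstacle sérieux.
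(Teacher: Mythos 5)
Votre démonstration est correcte : la récurrence sur la longueur, le passage au quotient $\aqo{\gA}{a_1,\ldots,a_{n-1}}$ pour obtenir $x_n\in\gen{a_1,\ldots,a_{n-1}}$, puis la correction par les relations triviales mixtes $a_ne_j-a_je_n$, constituent exactement l'argument standard. Le papier ne donne d'ailleurs aucune démonstration et renvoie simplement à \cite[IV-2.5]{ACMC}, où c'est cette même récurrence qui est menée ; votre rédaction comble donc le renvoi sans s'en écarter.
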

\begin{proof}
Voir \cite[IV-2.5]{CACM}. 
\end{proof}
%

\begin{proposition}[\eds et \srg] \label{prop-ChgBasSrg}~\\
Soient $\rho\colon \gA\to\gB$ une \Alg, $(\an)$ une suite dans~$\gA$.
Notons $(\bn)=\big(\rho(a_1),\dots,\rho(a_n)\big)$.
\begin{enumerate}
\item Si $\gB$ est plate sur $\gA$ et si la suite $(\an)$ est régulière, alors la suite $(\bn)$ est \rege. 
\item Si $\gB$ est \fpte sur $\gA$ et si  la suite $(\bn)$ est \rege, alors la suite $(\an)$ est  \rege.
\end{enumerate} 
\end{proposition}

%
\begin{proof}
En effet le fait que la suite est \rege signifie que l'on~a des suites exactes 
\[ 
\begin{array}{cccccccccccc} 
 0&  \to & \gA   & \vvers{\cdot\times a_ 1}   & \gA     \\[.1mm] 
 0&  \to &  \gA/ a_ 1\gA  & \vvers{\cdot\times {a_ 2}}   &  \gA/ a_ 1\gA  \\[.1mm] 
 0&  \to &  \gA  / (a_ 1\gA+a_ 2\gA) & \vvers{\cdot\times {a_ 3}}   &\gA  /(a_ 1\gA+a_ 2\gA)    
\\[.1mm] 
&&\vdots&&\vdots
 \end{array}
\]
On applique donc les résultats concernant la platitude (ou fidèle platitude) et les suites exactes.
\end{proof}
%

\begin{proposition}[\eds et suite \usc] \label{prop-ChgBasSuc}~\\
Soient $\rho\colon \gA\to\gB$ une \Alg, $(\an)$ une suite dans~$\gA$.
Notons $(\bn)=\big(\rho(a_1),\dots,\rho(a_n)\big)$.
\begin{enumerate}
\item Si $\gB$ est plate sur $\gA$ et si la suite $(\an)$ est \usc, alors la suite $(\bn)$ est \usc. 
\item Si $\gB$ est \fpte sur $\gA$ et si  la suite $(\bn)$ est \usc, alors la suite $(\an)$ est  \usc.
\end{enumerate} 
\end{proposition}
%
\begin{proof} Cela résulte de la \carn des suites \uscs par l'exactitude d'une suite (\dfn \ref{defi1seqFFR}).
\end{proof}
%
\begin{lemma} \label{lem-compsec-1}
Dans un anneau $\gA$, considérons un \itf $\fa=\gen{\an}$
et une \srg $(\bn)$ dans $\fa$. Alors dans l'extension de Nagata  \fpte $\gB=\gA(\Xn)$, l'\id $\fa$ est engendré par une \srg $(g_1,\dots,g_n)$. En outre la matrice de passage de $(\ua)$ à $(\ug)$ est \iv. 
\end{lemma}
%
\begin{proof}
Voir \cite[point 1 de la proposition B-2.4]{RLF}. 
\end{proof}
%

\begin{definition} \label{defi-compsec}
Soit $(\ua)=(\an)$ dans $\gA$. 
On dit que la suite $(\ua)$ est \textsl{\csce} s'il existe une \Alg \fpte $\varphi:\gA\to\gB$ telle que l'\id $ \gen{\varphi(a_1),\dots,\varphi(a_n)} $ contient une suite $(\bn)$ \ndze. 
\end{definition}

\begin{proposition} \label{prop-compsec-1sec}
Toute suite \csce est \usc. 
\end{proposition}
%
\begin{proof}
Soit $(\an)$ la \scs dans $\gA$. D'après le lemme précédent, dans une extension \fpte $\gB$ de $\gA$, l'\id $\fa=\gen{\an}$ est engendré par une \srg $(\gn)$, qui est \usc d'après la proposition \ref{prop1sEregpfSL}. 
En outre la matrice de passage de $(\ua)$ à $(\ug)$ est \iv. Il s'ensuit que $(\an)$ est \usc dans $\gB$ (lemme \ref{lemsucpf}), puis dans $\gA$ par fidèle platitude (proposition \ref{prop-ChgBasSuc}).
\end{proof}

\subsection*{Suites \uscs et platitude}

\begin{definition} \label{defisfuc} Soit $\gk$ un anneau arbitraire.
On dit que $(f_1,\dots,f_s)$ dans $\kXn$ est une \emph{\sfuc} si pour tout \itf
$\fc$ de $\gk$, la suite $(\ov{f_1},\dots,\ov{f_n})$ est \usc dans $(\gk/\fc)[\Xn]$.%
\end{definition}

\begin{theorem} \label{thsfuc}
\emph{(Relations triviales et platitude)}\\
Soit $\gk$ un anneau arbitraire, et soit $(\lfs)$  une \sfuc sur $\kuX$. Alors l'\alg quotient $\gA=\aqo\kuX{\lfs}$ est plate sur $\gk$.
\end{theorem}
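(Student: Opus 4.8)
Le plan est de ramener la platitude à une égalité d'idéaux dans $\kuX$, testée sur les idéaux de type fini de $\gk$, puis d'invoquer l'hypothèse modulo chacun de ces idéaux. Je poserais $\fa=\gen{\lfs}$, de sorte que $\gA=\kuX/\fa$, et j'utiliserais le critère : $\gA$ est plate sur $\gk$ \ssi pour tout \itf $\fc$ de $\gk$, l'application canonique $\fc\otimes_\gk\gA\to\gA$ est injective. Comme $\kuX$ est libre (donc plat) sur $\gk$, un calcul de suites exactes, à partir de $0\to\fc\to\gk\to\gk/\fc\to0$ et de $0\to\fa\to\kuX\to\gA\to0$, identifie le noyau de cette application, à savoir $\mathrm{Tor}_1^\gk(\gk/\fc,\gA)$, au quotient
$$
\big(\fa\cap\fc\,\kuX\big)\big/\,\fc\,\fa,\qquad \fc\,\fa=(\fc\,\kuX)\,\fa .
$$
Tout revient donc à établir, pour chaque \itf $\fc$, l'inclusion $\fa\cap\fc\,\kuX\subseteq\fc\,\fa$ (la réciproque étant triviale).

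Pour cette inclusion, je partirais de $g\in\fa\cap\fc\,\kuX$ et d'une écriture $g=\som_i h_if_i$ avec $h_i\in\kuX$. Puisque $g\in\fc\,\kuX$, la réduction modulo $\fc$ donne $\som_i\ov{h_i}\,\ov{f_i}=0$ dans $(\gk/\fc)[\uX]$ : autrement dit $(\ov{h_1},\dots,\ov{h_s})$ est une relation entre les $\ov{f_i}$. C'est exactement ici qu'intervient l'hypothèse : la suite $(\lfs)$ étant \fucz, la suite réduite $(\ov{f_1},\dots,\ov{f_s})$ est \usc dans $(\gk/\fc)[\uX]$, donc cette relation est combinaison des relations triviales. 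Il existe ainsi des $\ov{v_{ij}}\in(\gk/\fc)[\uX]$ tels que
$$
(\ov{h_1},\dots,\ov{h_s})=\som_{i<j}\ov{v_{ij}}\,\big(\ov{f_i}\,\mathbf e_j-\ov{f_j}\,\mathbf e_i\big).
$$

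Il ne resterait plus qu'à relever la situation. Choisissant des $v_{ij}\in\kuX$ au-dessus des $\ov{v_{ij}}$ et posant $w=\som_{i<j}v_{ij}\,(f_i\,\mathbf e_j-f_j\,\mathbf e_i)\in\kuX^{\,s}$, on obtient d'une part $h_i-w_i\in\fc\,\kuX$ pour tout $i$, d'autre part $\som_i w_if_i=0$, car une relation triviale annule identiquement la forme $\som_i(\cdot)_if_i$. Les éléments $h'_i:=h_i-w_i$ vérifient alors $h'_i\in\fc\,\kuX$ et $\som_i h'_if_i=g$, ce qui montre $g\in\fc\,\fa$ et achève la preuve. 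J'anticipe que la difficulté principale n'est pas ce dernier relèvement — gratuit précisément parce que les relations triviales sont identiquement vérifiées —, mais le choix du bon critère de platitude et l'identification du noyau à l'obstruction $(\fa\cap\fc\,\kuX)/\fc\,\fa$ : une fois cette traduction effectuée, l'hypothèse conclut sans calcul supplémentaire.
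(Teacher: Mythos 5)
Votre démonstration est correcte et suit pour l'essentiel la même voie que celle de l'article : même critère de platitude ramenant tout à l'inclusion $\gen{\lfs}\cap\fc[\uX]\subseteq \fc[\uX]f_1+\cdots+\fc[\uX]f_s$ (que vous retrouvez via $\mathrm{Tor}_1$ là où l'article cite directement sa proposition VIII-1.16), même réduction modulo l'idéal de $\gk$, même invocation de l'hypothèse \fuc et même relèvement. Votre écriture des relations triviales sous la forme $f_i\mathbf e_j-f_j\mathbf e_i$ est équivalente à la matrice antisymétrique $N$ utilisée dans le texte.
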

%
\begin{proof}
D'après le critère de platitude pour le quotient d'un $\gk$-module plat
(\cite[proposition VIII-1.16]{CACM}), ici le
quotient du \kmo $P=\kuX$ par le \hbox{sous-\kmo $\geN{\uf}$,  $\gA$} est plate sur~$\gk$ \ssi pour tout  \itf~$\fa$ de $\gk$, on a l'inclusion $\geN{\uf}\cap \fa P\subseteq \fa \geN{\uf}$, i.e.

\snic {
\gen {\lfs} \cap \fa[\uX] \subseteq  \fa f_1 + \cdots + \fa f_s.
}

\mni
C'est alors vrai pour tout \id $\fa$ de $\gk$, \tf ou non.
Concrètement, cela signifie que si $h=u_1f_1 + \cdots + u_sf_s\in\fa[\uX]$  (avec \hbox{les $u_i \in \gk[\uX]$}), on peut  réécrire $h$ sous la forme~\hbox{$v_1f_1 + \cdots + v_sf_s$} avec les~$v_i\in\fa[\uX]$.

\sni
Soit $h=u_1f_1 + \cdots + u_sf_s\in\fa[X]$; passons au quotient $(\gk/\fa)[\uX]$:

\snic {
\ov h =\ov{u_1}\,\ov{f_1} + \cdots +\ov{u_s}\,\ov{f_s} = 0.
}

\sni
Comme la suite $(\ov{f_1}, \ldots, \ov{f_s})$ est \usc, il existe
une matrice alternée $s \times s$,  à \coes dans $(\gk/\fa)[\uX]$,
notons là $N$, telle que
\[
\lst{\ov{u_1}\; \cdots\; \ov{u_s}} = \lst{\ov{f_1}\; \cdots\; \ov{f_s}}\, N
\]
On relève $N$ en une matrice alternée $M \in \MM_s(\gk[\uX])$ et
on définit des \pols $w_i \in \kuX$ par:
\[
\lst{w_1\; \cdots\; w_s} = \lst{f_1\; \cdots\; f_s}\, M.
\]
de sorte que, d'une part $\sum_i w_if_i = 0$ et d'autre part $\ov {w_i} = \ov {u_i}$
modulo $\fa$. On a alors:

\snic {\ds
\som_i u_i f_i = \som_i (u_i - w_i)f_i  \quad
\hbox {avec  }u_i - w_i  \hbox { à \coes dans } \fa.
}
\end{proof}

\subsection*{Une platitude remarquable}

\begin{lemma} \label{lemCompSec1}
Soit $\gk$ un anneau arbitraire,~$\fa$ un \id de $\gA=\kXn$ et $\gB=\gA/\fa=\kxn$. On suppose que~$\gB$ est finie sur $\gk$. 
Alors $\fa$ contient une \srg de longueur $n$.
\end{lemma}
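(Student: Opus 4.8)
L'idée est d'exhiber explicitement dans $\fa$ une \srg de longueur $n$, fabriquée à partir de l'intégralité des $x_i$ sur $\gk$. Pour cela je vais d'abord exploiter le fait que $\gB$ est un \kmo \tfz. Fixons un système générateur $(g_1, \ldots, g_r)$ de $\gB$ comme \kmoz. Pour chaque $i$, la multiplication par $x_i$ définit un endomorphisme $\gk$-linéaire de $\gB$, et le \tho de Cayley-Hamilton, valable constructivement, fournit un \pol \unt $p_i \in \gk[T]$, de degré noté $d_i$, tel que $p_i(x_i)$ annule $\gB$. En évaluant sur $1 \in \gB$, on obtient $p_i(x_i) = 0$ dans $\gB$, c'est-à-dire $f_i := p_i(X_i) \in \fa$.

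Je montre ensuite que la suite $(f_1, \ldots, f_n)$ ainsi obtenue est \ndzez, le point clef étant que chaque $f_i$ est \unt en la variable $X_i$. Je procède par quotients successifs. Comme $f_1 = p_1(X_1)$ est \unt en $X_1$, il est \ndz dans $\gA = \gk[X_2, \ldots, X_n][X_1]$, et l'on a un isomorphisme $\gA/\gen{f_1} \simeq \gk_1[X_2, \ldots, X_n]$, où $\gk_1 = \gk[X_1]/\gen{p_1}$ est un \kmo libre de base $(1, X_1, \ldots, X_1^{d_1 - 1})$. Dans cet anneau $f_2 = p_2(X_2)$ reste \unt en $X_2$, donc \ndzz, et ainsi de suite. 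De proche en proche, $(f_1, \ldots, f_n)$ est une \srg contenue dans $\fa$, et $\gA/\gen{f_1, \ldots, f_n}$ est un \kmo libre de rang $d_1 \cdots d_n$.

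Le seul point à soigner est la vérification de la régularité ; elle repose uniquement sur le fait élémentaire qu'un \pol \unt est \ndz dans un anneau de \pols et que le quotient associé est un module libre sur l'anneau de base. L'itération de ce fait donne la conclusion sans difficulté supplémentaire. Ainsi le c{\oe}ur de l'argument — et donc l'obstacle principal à lever proprement — est l'emploi de Cayley-Hamilton : c'est lui qui, à partir de la seule finitude de $\gB$, produit constructivement les relations \untsz $p_i(x_i) = 0$ ; une fois celles-ci en main, la régularité en découle formellement.
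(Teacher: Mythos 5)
Votre démonstration est correcte et suit pour l'essentiel la même voie que celle du texte : vous prenez pour chaque $i$ un \pol \unt $f_i\in\gk[X_i]$ qui annule $x_i$ (fourni par Cayley--Hamilton, point que le texte laisse implicite), puis vous établissez la régularité par quotients successifs en utilisant qu'un \pol \unt en une variable est \ndz dans l'anneau de \pols correspondant et que le quotient reste un anneau de \pols sur une base convenable. La seule différence, sans conséquence, est l'ordre dans lequel les variables sont traitées.
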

\begin{proof} Pour chaque $i$, soit $f_i\in\gk[X_i]$ \unt qui annule $x_i\in\gB$.\\
On montre que la suite $(f_n,\dots,f_{1})$ est une \srg de $\gA$.\\
On pose $\gA_0=\gA[X_1,\dots,X_{n-1}]$. Le \pol $f_{n}$ est \unt en~$X_n$, donc \ndz dans $\gA=\gA_0[X_n]$. On considère le quotient 
\[
\gB_1=\aqo\gA{f_{n}}=\gk[X_{1},\dots,X_{n-1},x_n]=\gA_1[X_{n-1}]
\]
avec $\gA_1=\gA[X_1,\dots,X_{n-2},x_{n}]$. Le \pol $f_{n-1}$ est \unt en~$X_{n-1}$  donc \ndz dans $\gA_1[X_{n-1}]$.\\
Et ainsi de suite.
\end{proof}
%

\begin{theorem} \label{thdeSmitLenstra} 
 \emph{(Lenstra {\&} de Smit, une platitude remarquable. \cite{SmLe})}
\\
Soit $\gk$ un anneau arbitraire, et soit une \klg $\gA=\aqo\kXn{f_1,\dots,f_n}$. Si~$\gA$ est finie sur $\gk$, la suite 
$(f_1,\dots,f_n)$ est \csce, l'\alg est plate et c'est un \kmo \ptf.
\end{theorem}
%
\begin{proof}
La \klg $\gA$ est \pf, et finie comme \kmo, elle est donc \pf comme
\kmo (\cite[\tho VI-3.17]{CACM}). 
Si elle est plate, c'est un $\gk$-\mptf.
\\
On montre que $(f_1,\dots,f_n)$ est une \sfuc. 
 En fait pour tout \id~$\fc$
de $\gk$, la suite $(\ov{f_1},\dots,\ov{f_n})$ est \csce dans~$(\gk/\fc)[\Xn]$, car l'\id $\geN{\ov{f_1},\dots,\ov{f_n}}$ contient une \srg de longueur $n$ comme démontré dans le lemme \ref{lemCompSec1}.\\
D'après le \tho \ref{thsfuc} l'\alg est plate.  
 \end{proof}

\end {document}